\newtheorem{definition}{Definition}
\newtheorem{example}{Example}
\newtheorem{lemma}{Lemma}
\newtheorem{theorem}{Theorem}
\begin{document}

\title{Derivation, interpretation, and analog modelling  of fractional variable order derivative definition\footnote{Preprint submitted to Applied Mathematical Modelling, January 24, 2013}}
\author{Dominik Sierociuk, Wiktor Malesza and Michal Macias}


\date{\small{Institute of Control and Industrial Electronics, Warsaw University of Technology, Koszykowa 75, Warsaw, Poland
        (e-mail: dsieroci@ee.pw.edu.pl; wmalesza@ee.pw.edu.pl; michal.macias@ee.pw.edu.pl).}}

\maketitle



\begin{abstract}
The paper presents derivation and interpretation of one type of variable order derivative definitions. For mathematical modelling of considering definition the switching and numerical scheme is given. The paper also introduces a numerical  scheme for a variable order derivatives based on matrix approach. Using this approach, the identity of the switching scheme and considered definition is derived. The switching scheme can be used as an interpretation of this type of definition. Paper presents also numerical examples for introduced methods. Finally, the idea and results of analog (electrical) realization of the switching fractional order integrator (of orders $0.5$ and $1$) are presented and compared with numerical approach.
\end{abstract}

%




\section{Introduction}
\label{sec1}

Fractional calculus is a generalization of traditional integer order integration and
differentiation actions onto non-integer order fundamental operator. The idea of such a generalization has been mentioned in 1695 by Leibniz and L'Hospital. In the end of 19th century,  
Liouville and Riemann introduced first definition of fractional derivative. However, only just in late 60' of the 20th
century, this idea drew attention of engineers and mathematicians. Theoretical background of fractional calculus can be found in \cite{old74, pod99, sam87, vinagre2010, Sheng2012}. Basic analysis of fractional differential equations was presented in \cite{Sayevand20124356, Kazem20135498, wiktor2011}.  Fractional calculus was found a very useful tool for modelling behavior of many materials and systems, especially those based on the diffusion processes. One of such devices that can be modeled more efficiently by fractional calculus are ultracapacitors. Models of these electronic storage devices, whose capacity can be even thousands of Farads, based on fractional order models were presented in \cite{Brou:08, Dzie:10, Dzie:11, Quintana:2008}.

Recently, the case when the order is changing in time, started to be intensively developed. The variable fractional order behaviour can be met for example in chemistry (when the properties of the system are changing due to chemical reactions), electrochemistry, and others areas. In~\cite{61}, experimental studies of an electrochemical example of physical fractional variable order  system are presented. In~\cite{58}, the variable order equations were used to describe a history of drag expression.  Papers \cite{61,56,59} present methods for numerical realization of fractional variable order integrators or differentiators.  The fractional variable order calculus also can be used to obtain variable order fractional noise \cite{57}, and to obtain new control algorithms \cite{28}. Some properties of such systems are presented in \cite{27}. In the paper~\cite{21}, the variable order interpretation of the analog realization of fractional orders integrators, realized as domino ladders, was presented. The applications of variable order derivatives and integrals can be found also in signal processing \cite{Sheng2012}.

The rest of the paper is organized as follows. Section~\ref{sec:gl} presents existing generalizations of Grunwald-Letnikov definition of fractional order derivatives. In Section~\ref{sec:prac}, a switching scheme for practical implementation of variable order derivative is given and studied. Section~\ref{sec:prac} presents also a generalization of the matrix approach for switching order and derivation of identity of the switching scheme and the second type of definition. Section~\ref{sec:exampl} presents numerical examples of the proposed methods compared to the analytical solutions. Finally, Section \ref{sec:analog} presents an analog realization of the switched order integrator and comparison of obtained results to the numerical solutions.

\section{Fractional variable order Grunwald-Letnikov type derivatives}\label{sec:gl}

As a base of generalization onto variable order derivative the following definition is taken into consideration:

\begin{definition}\label{def:const}{Fractional constant order derivative is defined as follows:}
\begin{equation*}
_0\mathrm{D}^{\alpha}_t f(t)= \lim_{h\to 0} \frac{1}{h^{\alpha}} \sum^{n}_{r=0} (-1)^r \binom{\alpha}{r} f(t-rh),
\end{equation*}
where $n=\lfloor t/h \rfloor$.
\end{definition}

According to this definition, one obtains: fractional derivatives for $\alpha>0$, fractional integrals for $\alpha<0$, and the original function $f(t)$ for $\alpha=0$. 
For the case of order changing with time (variable order case), three types of definition can be found in the literature \cite{Loren:02}, \cite{Valerio:11}. The first one is obtained by replacing of constant order $\alpha$ by variable order $\alpha(t)$. In that approach, all coefficients for past samples are obtained for present value of the order and is given as follows:
\begin{definition}\label{def:rozn1}{The 1st type of fractional variable order derivative is defined as follows:}
\begin{equation*}
_0\mathrm{D}^{\alpha(t)}_t f(t)= \lim_{h\to 0} \frac{1}{h^{\alpha(t)}} \sum^{n}_{r=0} (-1)^r \binom{\alpha(t)}{r} f(t-rh).
\end{equation*}
\end{definition}
In~Fig.~\ref{fig:def1}, plots of unit step function $1(t)$ derivatives (according to Def.~\ref{def:rozn1}) are presented for $\alpha_1(t) = -1$, $\alpha_2(t) = -2$, and
\begin{equation}\label{eq:a3}
\alpha_3(t) = 
\begin{cases}
-1 & \text{for $0\leq t< 1$,}\\
-2 & \text{for $1\leq t\leq 2$.}
\end{cases}
\end{equation}

\begin{figure}[ht!]
\centering

\psfrag{a3}[l][l]{\scriptsize $-\alpha_3(t)$} 
\psfrag{Da1}[l][l]{\scriptsize $_0\mathrm{D}^{\alpha_1}_t1(t)$}
\psfrag{Da2}[l][l]{\scriptsize $_0\mathrm{D}^{\alpha_2}_t1(t)$}
\psfrag{Da3}[l][l]{\scriptsize $_0\mathrm{D}^{\alpha_3(t)}_t1(t)$}
\psfrag{Time}[l][l]{\scriptsize $t$} 

\includegraphics[scale=0.55]{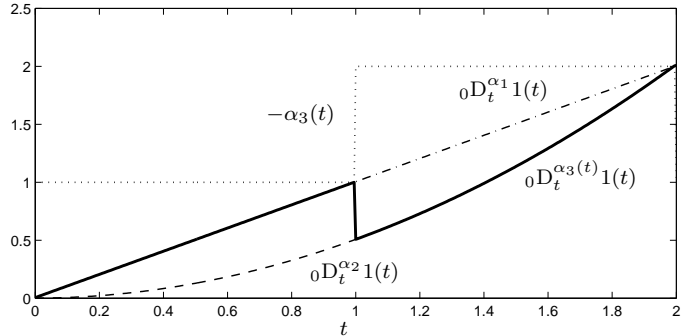}
\caption{Plots of unit step function derivatives with respect to the 1st type derivative (given by Def.~\ref{def:rozn1})}\label{fig:def1}
\end{figure}
The second type of definition assumes that coefficients for past samples are obtained for order that was present for these samples. In this case, the definition has the following form:
\begin{definition}\label{def:rozn2}{The 2nd type of fractional variable order derivative is defined as follows:}
\begin{equation*}
_0\mathrm{D}^{\alpha(t)}_t f(t)= \lim_{h\to 0}  \sum^{n}_{r=0} \frac{(-1)^r}{h^{\alpha(t-rh)}}  \binom{\alpha(t-rh)}{r} f(t-rh).
\end{equation*}
\end{definition}
In~Fig.~\ref{fig:def2}, plots of unit step function derivatives (according to Def.~\ref{def:rozn2}) are presented for $\alpha_1(t) = -1$, $\alpha_2(t) = -2$, and $\alpha_3(t)$ given by~(\ref{eq:a3}).
\begin{figure}[ht!]
\centering

\psfrag{a3}[l][l]{\scriptsize $-\alpha_3(t)$} 
\psfrag{Da1}[l][l]{\scriptsize $_0\mathrm{D}^{\alpha_1}_t1(t)$}
\psfrag{Da2}[l][l]{\scriptsize $_0\mathrm{D}^{\alpha_2}_t1(t)$}
\psfrag{Da3}[l][l]{\scriptsize $_0\mathrm{D}^{\alpha_3(t)}_t1(t)$}
\psfrag{Time}[l][l]{\scriptsize $t$} 

\includegraphics[scale=0.55]{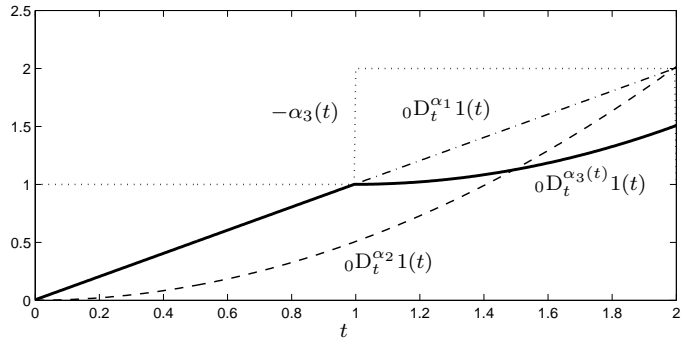}
\caption{Plots of unit step function derivatives with respect to the 2nd type derivative (given by Def.~\ref{def:rozn2})}\label{fig:def2}
\end{figure}
The third definition is less intuitive and assumes that coefficients for the newest samples are obtained respectively for the oldest orders. For such a case, the following definition applies:
\begin{definition}\label{def:rozn3}{The 3rd type of fractional variable order derivative is defined as:}
\begin{equation*}
_0\mathrm{D}^{\alpha(t)}_t f(t)= \lim_{h\to 0}  \sum^{n}_{r=0} \frac{(-1)^r}{h^{\alpha(rh)}} \binom{\alpha(rh)}{r} f(t-rh).
\end{equation*}
\end{definition}
In~Fig.~\ref{fig:def3}, plots of unit step function derivatives (according to Def.~\ref{def:rozn3}) are presented for $\alpha_1(t) = -1$, $\alpha_2(t) = -2$, and $\alpha_3(t)$ given by~(\ref{eq:a3}).
\begin{figure}[ht!]
\centering

\psfrag{a3}[l][l]{\scriptsize $-\alpha_3(t)$} 
\psfrag{Da1}[l][l][1][19]{\scriptsize $_0\mathrm{D}^{\alpha_1}_t1(t)$}
\psfrag{Da2}[l][l]{\scriptsize $_0\mathrm{D}^{\alpha_2}_t1(t)$}
\psfrag{Da3}[l][l]{\scriptsize $_0\mathrm{D}^{\alpha_3(t)}_t1(t)$}
\psfrag{Time}[l][l]{\scriptsize $t$} 

\includegraphics[scale=0.55]{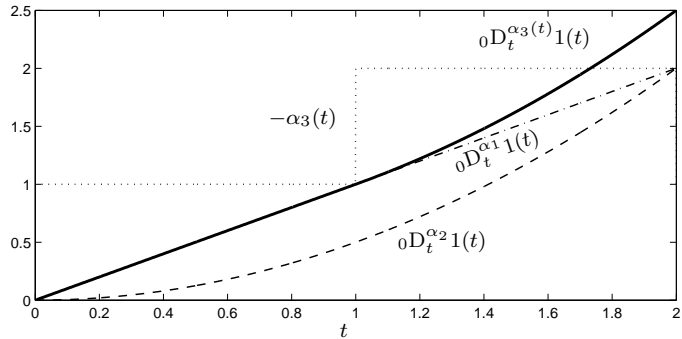}
\caption{Plot of unit step function derivatives with respect to the 3rd type derivative (given by Def.~\ref{def:rozn3})}\label{fig:def3}
\end{figure}

From the comparison of the three types of derivatives, it can be seen from the plots presented above that there are crucial differences between the nature of derivatives of switching order (for constant order derivatives all the definitions yield the same behavior). Namely, in the case of the 1st type derivative, at the switching time instant, the derivative output ``jumps'' from the plot of constant order $\alpha_1$ to the plot of constant order $\alpha_2$  and keeps to follow it. In the case of the 2nd type derivative, at the switching time instant, the derivative output stops fitting the plot of constant order $\alpha_1$ and starts to integrate like at the beginning of the plot of constant order $\alpha_2$ but starting from another initial value. Concerning the 3rd type derivative, at the switching time instant, the plot of switching order derivative stops fitting the plot of constant order $\alpha_1$ and does continue to integrate with constant order $\alpha_2$ (without ``jumping'' to the plot of $\alpha_2$ -- unlike the 1st type derivative) like it is for the plot of constant order $\alpha_2$ at the same time interval, i.e., for $t\geq 1$. 

\section{Practical implementation and numerical scheme of switched (variable) order derivative}\label{sec:prac}
In this section, the routines and schemes for switching order derivative are presented. For simplicity, we start with the simplest case of order switching, namely switching between two real arbitrary constant orders, e.g., $\alpha_1$ and $\alpha_2$. Next, this idea will be generalized for a multiple-switching (variable order) case.

\subsection{Simple-switching order case}
The idea is depicted in Fig.~\ref{fig:chain}, where all the switches $S_i$, $i=1,2$, change their positions depending on an actual value of $\alpha(t)$. If we want to switch from $\alpha_1$ to $\alpha_2$, then, before switching time $T$, we have: $S_1 = b$, $S_2 = a$, and after this time: $S_1 = a$ and $S_2 = b$. At the instant time $T$, the derivative block of complementary order $\bar\alpha_{2}$ is pre-connected on the front of the current derivative block of order $\alpha_1$, where 
\begin{equation}\label{eq:alfa2bar}
\bar\alpha_{2} = \alpha_{2} - \alpha_1.
\end{equation}
If $\bar\alpha_{2}<0$, then $_T\mathrm{D}^{\bar\alpha_{2}}_t$ corresponds to integration of $f(t)$; and, if $\bar\alpha_{2}>0$, then $_T\mathrm{D}^{\bar\alpha_{2}}_t$ corresponds to derivative of $f(t)$, with appropriate order ${\bar\alpha_{2}}$.

\begin{figure}[ht!]
\centering
\includegraphics[scale=1]{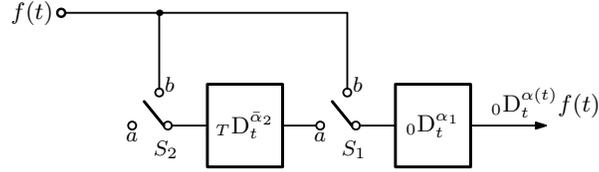}
\caption{Structure of simple-switching order derivative (switching from $\alpha_1$ to~$\alpha_2$)}\label{fig:chain}
\end{figure}

Now, the numerical scheme corresponding to the above derivative switching structure is introduced.
The matrix form of the fractional order derivative is given as follows \cite{matrix_approach,matrix_approach_2}:
\begin{equation*}
\begin{pmatrix}
_0\mathrm{D}^\alpha_0 f({0})\\
_0\mathrm{D}^\alpha_h f({h})\\
_0\mathrm{D}^\alpha_{2h} f({2h})\\
\vdots\\
_0\mathrm{D}^\alpha_{kh} f({kh})\\
\end{pmatrix}= \lim_{h\to 0}W(\alpha,k)\begin{pmatrix}
 f({0})\\
 f({h})\\
 f({2h})\\
\vdots\\
 f({kh})\\
\end{pmatrix},
\end{equation*}
where
\begin{equation}\label{eqn:matrix_appr}
W(\alpha,k)=\begin{pmatrix}
1 & 0 & 0&\dots&0\\
w_{\alpha,1} & 1 & 0&\dots&0\\
w_{\alpha,2} &w_{\alpha,1} & 1 &\dots&0\\
w_{\alpha,3} &w_{\alpha,2} &w_{\alpha,1}&\dots&0\\
\vdots&\vdots&\vdots& \dots&\vdots\\
w_{\alpha,k} &w_{\alpha,k-1} &w_{\alpha,k-2} &\dots&1
\end{pmatrix},
\end{equation}
$W(\alpha,k)\in \mathbb{R}^{(k+1)\times(k+1)}$, $w_{\alpha,i}=\frac{(-1)^i\binom{\alpha}{i}}{h^\alpha}$, and $h=t/k$ is a time step, $k$ is a number of samples.
\begin{lemma}\label{lem1}
For a switching order case, when the switch from order $\alpha_1$ to order ${\alpha_{2}}$ occurs at time $T$, the numerical scheme has the following form:
 \begin{equation*}
\begin{pmatrix}
_0\mathrm{D}^{\alpha(t)}_0 f({0})\\
_0\mathrm{D}^{\alpha(t)}_h f({h})\\
\vdots\\
_0\mathrm{D}^{\alpha(t)}_{(T-1)h} f({Th-h})\\
_0\mathrm{D}^{\alpha(t)}_{Th} f({Th})\\
\vdots\\
_0\mathrm{D}^{\alpha(t)}_{kh} f({kh})
\end{pmatrix}= \lim_{h\to 0} W(\alpha_1,k)W({\bar\alpha_{2}},k,T)\begin{pmatrix}
 f({0})\\
 f({h})\\
\vdots\\
f({Th-h})\\
f({Th})\\
\vdots\\
 f({kh})
\end{pmatrix},
\end{equation*}
where
\begin{equation*}
W({\bar\alpha_{2}},k,T)=\begin{pmatrix}
I_{T,T} & 0_{T,k-T+1}\\
0_{k-T+1,T} &W({\bar\alpha_{2}},k-T)
\end{pmatrix},
\end{equation*}
and 
\begin{equation*}
\alpha(t)=\begin{cases}
\alpha_1 & \text{for $t<T$}, \\
\alpha_1+{\bar\alpha_{2}} & \text{for $t\geq T$}. \end{cases}
\end{equation*}
\end{lemma}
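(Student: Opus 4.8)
The plan is to verify the stated factorization directly, by combining the lower-triangular Toeplitz structure of $W(\alpha_1,k)$ with the composition (additivity-of-orders) law for Grünwald--Letnikov coefficients, and then to read off the order block by block.

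First I would split the sample indices $\{0,1,\dots,k\}$ into the pre-switch set $\{0,\dots,T-1\}$ and the post-switch set $\{T,\dots,k\}$ and write $W(\alpha_1,k)$ in the induced $2\times 2$ block form
\[
W(\alpha_1,k)=\begin{pmatrix} A & 0 \\ B & C \end{pmatrix},
\]
the upper-right block vanishing by lower triangularity. Since each entry $w_{\alpha_1,i-j}$ depends only on the difference $i-j$, the diagonal blocks are again matrices of the same kind, namely $A=W(\alpha_1,T-1)$ and $C=W(\alpha_1,k-T)$, while $B$ collects the order-$\alpha_1$ coefficients coupling the post-switch outputs to the pre-switch samples. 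Using the given block-diagonal form $W(\bar\alpha_{2},k,T)=\operatorname{diag}\!\big(I_{T,T},\,W(\bar\alpha_{2},k-T)\big)$, the product reduces to
\[
W(\alpha_1,k)\,W(\bar\alpha_{2},k,T)=\begin{pmatrix} A & 0 \\ B & C\,W(\bar\alpha_{2},k-T) \end{pmatrix},
\]
so the only block that has to be identified is the lower-right one.

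The key step is then the composition identity $W(\alpha_1,m)\,W(\beta,m)=W(\alpha_1+\beta,m)$. Because lower-triangular Toeplitz matrices multiply by convolving their defining sequences, this reduces to the generalized Vandermonde convolution $\sum_{r=0}^{i}(-1)^r\binom{\alpha_1}{r}(-1)^{i-r}\binom{\beta}{i-r}=(-1)^i\binom{\alpha_1+\beta}{i}$ --- the coefficient form of $(1-z)^{\alpha_1}(1-z)^{\beta}=(1-z)^{\alpha_1+\beta}$ --- together with the exponent law $h^{-\alpha_1}h^{-\beta}=h^{-(\alpha_1+\beta)}$ that absorbs the $1/h^{\alpha}$ normalization carried by $w_{\alpha,i}$. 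Invoking it with $\beta=\bar\alpha_{2}$ gives $C\,W(\bar\alpha_{2},k-T)=W(\alpha_1,k-T)\,W(\bar\alpha_{2},k-T)=W(\alpha_1+\bar\alpha_{2},k-T)=W(\alpha_2,k-T)$.

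Finally I would interpret the resulting blocks. The block $A=W(\alpha_1,T-1)$ reproduces exactly the constant order-$\alpha_1$ scheme on the first $T$ outputs, so every sample with $t<T$ carries order $\alpha(t)=\alpha_1$; the lower-right block $W(\alpha_2,k-T)$ applies order $\alpha_1+\bar\alpha_{2}=\alpha_2$ to the post-switch samples, and the retained block $B$ supplies the order-$\alpha_1$ memory of the pre-switch samples. This is precisely the structure of Fig.~\ref{fig:chain}, in which the complementary block $_T\mathrm{D}^{\bar\alpha_{2}}_t$ is pre-connected in front of the running block $_0\mathrm{D}^{\alpha_1}_t$ at time $T$; comparing with the piecewise definition of $\alpha(t)$ closes the argument. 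I expect the main obstacle to be the bookkeeping rather than any deep difficulty: one must check that right-multiplication by $W(\bar\alpha_{2},k,T)$ faithfully models applying the complementary block \emph{first} (it sits ``on the front''), and that the Toeplitz identification of the diagonal sub-blocks with the smaller copies $W(\alpha_1,\cdot)$ is exact, so that the composition identity can be applied verbatim to the lower-right block.
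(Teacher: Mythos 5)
Your proof is correct, and it takes a genuinely different route from the paper's. The paper proves the Lemma by a signal-flow argument that simply transcribes Fig.~\ref{fig:chain} into matrix form: the sample vector is first multiplied by $W(\bar\alpha_{2},k,T)$ (identity before the switch, the pre-connected complementary block $\bar\alpha_{2}$ from time $T$ on), and the resulting intermediate signal is then fed through the $\alpha_1$-block $W(\alpha_1,k)$; the factorization $W(\alpha_1,k)W(\bar\alpha_{2},k,T)$ is thus read off the cascade by construction, with no matrix algebra and no identification of the entries of the product. You instead verify the identity algebraically: the $2\times 2$ block splitting of the lower-triangular Toeplitz matrix together with the composition law $W(\alpha,m)W(\beta,m)=W(\alpha+\beta,m)$ (Vandermonde convolution plus the $h$-exponent law) lets you identify the lower-right block of the product as $W(\alpha_2,k-T)$ and hence recognize, column by column, the coefficients $w_{\alpha(jh),\,i-j}$ of Def.~\ref{def:rozn2}. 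Notably, that composition law is exactly the tool the paper reserves for the proof of the subsequent Theorem (where $W(\bar\alpha_0,k-1)W(\bar\alpha_1,k-1)=W(\bar\alpha_0+\bar\alpha_1,k-1)$ is used step by step), so your argument effectively proves the single-switch instance of the Theorem already inside the Lemma. What each approach buys: the paper's reading keeps the Lemma purely structural --- the product \emph{is} the scheme of Fig.~\ref{fig:chain} --- and defers all coefficient bookkeeping to the Theorem, whereas yours is self-contained, independent of interpreting the block diagram, and makes explicit what the product matrix actually is. One caution: your computed product (lower-left block with entries $w_{\alpha_1,i-j}$, lower-right block $W(\alpha_2,k-T)$) is the one consistent with the Lemma as stated (identity block $I_{T,T}$) and with Def.~\ref{def:rozn2}; the paper's worked example uses an off-by-one variant in which the $W(\bar\alpha_{2},\cdot)$ block starts at index $T-1$, so your matrix will differ from the example's in column $T-1$ --- this is an internal inconsistency of the paper, not an error in your argument.
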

The order ${\bar\alpha_{2}}$, appearing above, is given by relation (\ref{eq:alfa2bar}).
\begin{proof}
The signal incoming to the block of derivative $\alpha_1$ can be described as follows
 \begin{equation*}
\begin{pmatrix}
 f({0})\\
f({h})\\
\vdots\\
 f({Th-h})\\
_{Th}\mathrm{D}^{\bar\alpha_{2}}_{Th} f({Th})\\
\vdots\\
_{Th}\mathrm{D}^{\bar\alpha_{2}}_{kh} f({kh})
\end{pmatrix}= \lim_{h\to 0}W({\bar\alpha_{2}},k,T)\begin{pmatrix}
 f({0})\\
 f({h})\\
\vdots\\
f({Th-h})\\
f({Th})\\
\vdots\\
 f({kh})
\end{pmatrix}.
\end{equation*}
Until time $T$, the input of $\alpha_1$-block  obtains the original function $f(t)$, so in matrix $W({\bar\alpha_{2}},k,T)$ we have an identity matrix. From time step $T$ the input signal is passes through the block of derivative ${\bar\alpha_{2}}$ and we have the sub-matrix $W({\bar\alpha_{2}},k-T)$ that is responsible for starting the ${\bar\alpha_{2}}$ derivative action from time $T$.
That signal passes to the $\alpha_1$-block and has the following matrix form:
\begin{equation*}
\begin{pmatrix}
_0\mathrm{D}^{\alpha(t)}_0 f({0})\\
_0\mathrm{D}^{\alpha(t)}_h f({h})\\
\vdots\\
_0\mathrm{D}^{\alpha(t)}_{(T-1)h} f({Th-h})\\
_0\mathrm{D}^{\alpha(t)}_{Th} f({Th})\\
\vdots\\
_0\mathrm{D}^{\alpha(t)}_{kh} f({kh})
\end{pmatrix}=\lim_{h\to 0}W(\alpha_1,k)\begin{pmatrix}
 f({0})\\
f({h})\\
\vdots\\
 f({Th-h})\\
_{Th}\mathrm{D}^{\alpha_2}_{Th} f({Th})\\
\vdots\\
_{Th}\mathrm{D}^{\alpha_2}_{kh} f({kh})
\end{pmatrix}.
\end{equation*}
\end{proof}
\begin{example}
Consider integration of the unit step function $f(t) = 1(t)$, with the switching variable order $\alpha(t)$ taking value $\alpha_1=-1$until switching time $T = 4$, and after this time, order $\alpha_2=-2$.

Numerical calculations performed for $k=5$ yield:
\[ W(\alpha_1,5) = 
\begin{pmatrix}
1 & 0 & 0 & 0 & 0 & 0\\
1 & 1 & 0 & 0 & 0 & 0\\
1 & 1 & 1 & 0 & 0 & 0\\
1 & 1 & 1 & 1 & 0 & 0\\
1 & 1 & 1 & 1 & 1 & 0\\
1 & 1 & 1 & 1 & 1 & 1
\end{pmatrix}, \quad W(\bar{\alpha}_2,5,4) = 
\begin{pmatrix}
1 & 0 & 0 & 0 & 0 & 0\\
0 & 1 & 0 & 0 & 0 & 0\\
0 & 0 & 1 & 0 & 0 & 0\\
0 & 0 & 0 & 1 & 0 & 0\\
0 & 0 & 0 & 1 & 1 & 0\\
0 & 0 & 0 & 1 & 1 & 1
\end{pmatrix},\]
\[
W(\alpha_1,5)W(\bar{\alpha}_2,5,4) = 
\begin{pmatrix}
1 & 0 & 0 & 0 & 0 & 0\\
1 & 1 & 0 & 0 & 0 & 0\\
1 & 1 & 1 & 0 & 0 & 0\\
1 & 1 & 1 & 1 & 0 & 0\\
1 & 1 & 1 & 2 & 1 & 0\\
1 & 1 & 1 & 3 & 2 & 1
\end{pmatrix},\]

where $\bar{\alpha}_2=-1$.
\end{example}
\subsection{Multiple-switching (variable order) case}
In general case,  when there are many switchings between arbitrary orders, we have the following structure, presented in Fig.~\ref{fig:mchain}.
\begin{figure*}[ht!]
\centering
\includegraphics[scale=1]{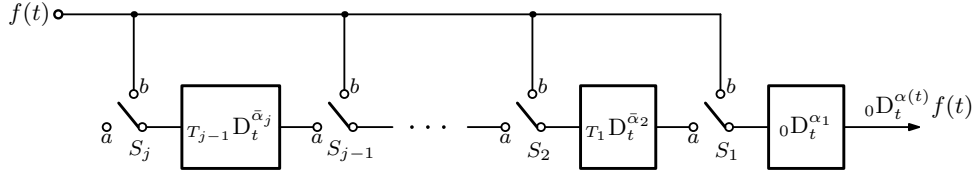}
\caption{Structure of multiple-switching order derivatives}\label{fig:mchain}
\end{figure*}
When we switch from the order $\alpha_{j-1}$ to the order $\alpha_j$ at the switch-time instant $T_{j-1}$, for $j=2,3,\ldots$, we have to set:
\[
S_i = 
\begin{cases}
a & \text{for $i=1,\ldots,j-1$},\\
b & \text{for $i=j$},
\end{cases}
\]
and the pre-connected derivative block (on the front of the previous term) is of the following complementary order:
\[
\bar\alpha_{j} = \alpha_{j} - \alpha_{j-1},
\]
where
\[
\alpha_{j-1} = \alpha_1 + \sum_{k=1}^{j-2}{\bar\alpha_{k+1}}.
\]
The numerical scheme describing the already presented general case of structure allowing to switch between an arbitrary number of orders is given below.
What is very important, the numerical scheme of multiple-switching case (when order is switched in each sample time) is equivalent to the 2nd type of variable order derivative.
\begin{theorem}{Switching order scheme presented in Fig.~\ref{fig:mchain} is equivalent to the 2nd type of variable order derivative (given by Def.~\ref{def:rozn2}).}
\end{theorem}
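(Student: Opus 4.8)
The plan is to reduce the stated equivalence to a single matrix identity valid for each fixed step $h$, and then to pass to the limit $h\to 0$ on both sides. First I would record the matrix form of the 2nd type derivative. Discretising Definition~\ref{def:rozn2} at $t=mh$ (so $n=m$) and substituting $s=m-r$, the coefficient of $f(sh)$ in the $m$-th output is $\frac{(-1)^{m-s}}{h^{\alpha(sh)}}\binom{\alpha(sh)}{m-s}=w_{\alpha(sh),\,m-s}$. Hence the 2nd type scheme is left multiplication by the lower triangular matrix $M$ with $(m,s)$ entry $w_{\alpha(sh),\,m-s}$ for $s\le m$ and $0$ otherwise. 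The essential feature is that the order entering the $(m,s)$ coefficient is $\alpha(sh)$, i.e.\ the order attached to the \emph{column} index.

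Writing $\alpha^{(j)}=\alpha(jh)$ and $\bar\alpha_{j+1}=\alpha^{(j)}-\alpha^{(j-1)}$, and iterating the single-switch construction of Lemma~\ref{lem1} with one switch per sample, the scheme of Fig.~\ref{fig:mchain} is left multiplication by
\[
P=W(\alpha^{(0)},k)\,W(\bar\alpha_{2},k,1)\,W(\bar\alpha_{3},k,2)\cdots W(\bar\alpha_{k+1},k,k),
\]
where each factor $W(\bar\alpha_{j+1},k,j)$ is the identity on indices $0,\dots,j-1$ and the constant order matrix $W(\bar\alpha_{j+1},k-j)$ on the remaining block. The goal is then simply to prove $P=M$.

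The core of the argument is to evaluate $P$ by induction on the number $\ell$ of factors multiplied after $W(\alpha^{(0)},k)$, establishing the invariant
\[
\Bigl[\,W(\alpha^{(0)},k)\textstyle\prod_{j=1}^{\ell}W(\bar\alpha_{j+1},k,j)\,\Bigr]_{m,s}=w_{\alpha^{(\min(s,\ell))},\,m-s}.
\]
The base case $\ell=0$ is the Toeplitz matrix $W(\alpha^{(0)},k)$ itself. For the inductive step, right multiplication by $W(\bar\alpha_{\ell+1},k,\ell)$ leaves every column $s<\ell$ unchanged (identity block), while for $s\ge\ell$ the new entry is $\sum_{t=s}^{m}w_{\alpha^{(\ell-1)},\,m-t}\,w_{\bar\alpha_{\ell+1},\,t-s}$, because on those columns the previous product contributes $w_{\alpha^{(\ell-1)},\,m-t}$ (here $\min(t,\ell-1)=\ell-1$ since $t\ge s\ge\ell$). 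The workhorse is the semigroup/Vandermonde identity $\sum_{i+j=r}w_{\beta,i}\,w_{\gamma,j}=w_{\beta+\gamma,\,r}$, which follows from $\sum_{i+j=r}\binom{\beta}{i}\binom{\gamma}{j}=\binom{\beta+\gamma}{r}$ together with $h^{\beta}h^{\gamma}=h^{\beta+\gamma}$. With $\beta=\alpha^{(\ell-1)}$ and $\gamma=\bar\alpha_{\ell+1}=\alpha^{(\ell)}-\alpha^{(\ell-1)}$ the sum collapses to $w_{\alpha^{(\ell)},\,m-s}=w_{\alpha^{(\min(s,\ell))},\,m-s}$, completing the step. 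Taking $\ell=k$ gives $[P]_{m,s}=w_{\alpha^{(s)},\,m-s}=M_{m,s}$, hence $P=M$, and applying $\lim_{h\to 0}$ to both sides returns the two definitions.

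I expect the main obstacle to be the bookkeeping forced by the shifted block structure: because the factors are not pure Toeplitz matrices, the clean index law $W(\beta)W(\gamma)=W(\beta+\gamma)$ cannot be invoked globally, and one must track column by column which factors contribute their convolution block and which contribute only the identity. The $\min(s,\ell)$ invariant is precisely the device that makes this explicit and lets the telescoping $\alpha^{(0)}+\sum_{j=1}^{s}(\alpha^{(j)}-\alpha^{(j-1)})=\alpha^{(s)}$ proceed one factor at a time; verifying the Vandermonde convolution and the index ranges in the inductive step is the only genuinely computational part. Commuting $\lim_{h\to 0}$ with the product is harmless, since $k$ is finite for each fixed $h$.
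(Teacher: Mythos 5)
Your proposal is correct and follows essentially the same route as the paper's proof: both reduce the switching scheme to the matrix product $\prod_j W(\bar\alpha_j,k,j)$ via Lemma~\ref{lem1}, evaluate that product factor by factor, and identify the resulting entries $w_{\alpha(sh),\,m-s}$ with the coefficients of Def.~\ref{def:rozn2}. The only difference is one of rigor, in your favor: the paper computes the first two factors explicitly and asserts the general pattern, whereas you formalize it as an induction with the $\min(s,\ell)$ invariant and explicitly invoke the Vandermonde convolution $\sum_{i+j=r}w_{\beta,i}w_{\gamma,j}=w_{\beta+\gamma,r}$, which the paper uses only implicitly when multiplying the lower-right blocks.
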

\begin{proof}
Let us assume that the order of derivative changes with every time step. which gives a variable order derivative, and is given as follows:
\begin{equation}
\alpha_k=\sum_{j=0}^k \bar{\alpha}_j \label{suma},
\end{equation}
where, in this case,  $\alpha_0=\bar{\alpha}_0$ is a value of initial order.
Using Lemma \ref{lem1} the following numerical scheme is obtained
 \begin{equation*}
\begin{pmatrix}
_0\mathrm{D}^{\alpha(t)}_0 f({0})\\
\vdots\\
_0\mathrm{D}^{\alpha(t)}_{kh} f({kh})
\end{pmatrix}= \prod_{j=0}^k W(\bar{\alpha}_j,k,jh)\begin{pmatrix}
 f({0})\\
 \vdots\\
 f({kh})
\end{pmatrix}.
\end{equation*}

The first switching matrices can be described as the following block matrices:
\begin{align*}
W(\bar{\alpha}_0,k,0)W(\bar{\alpha}_1,k,1)&=\left( \begin{array}{c|c}
1 & 0_{1,k}\\ \hline
R(\bar{\alpha}_0,1) & W(\bar{\alpha}_0,k-1)
\end{array}\right)
\left( \begin{array}{c|c}
1 & 0_{1,k}\\ \hline
0 & W(\bar{\alpha}_1,k-1)
\end{array}\right) \\
&= \left( \begin{array}{c|c}
1 & 0_{1,k}\\ \hline
R(\bar{\alpha}_0,1) & W(\bar{\alpha}_0+\bar{\alpha}_1,k-1)
\end{array}\right),
\end{align*}
where 
\begin{equation*}
R(\bar{\alpha},i)=\begin{pmatrix}
w_{\bar{\alpha},i}\\
w_{\bar{\alpha},i+1}\\
\vdots 
\end{pmatrix}
\end{equation*}
is a vector with coefficients given by~(\ref{eqn:matrix_appr}).
For a switching in the next sample time, we obtain the following numerical scheme:
{\setlength\arraycolsep{2.5pt}
\begin{align*}
&W(\bar{\alpha}_0,k,0)W(\bar{\alpha}_1,k,1)W(\bar{\alpha}_2,k,2)\\
&= \left( \begin{array}{cc|c}
1 & 0 & 0_{1,k-1}\\ 
w_{\bar{\alpha}_0,1} &1 & 0_{1,k-1}\\\hline
R(\bar{\alpha}_0,2)& R(\bar{\alpha}_0+\bar{\alpha}_1,1) & W(\bar{\alpha}_0+\bar{\alpha}_1,k-2)
\end{array}\right)\left( \begin{array}{cc|c}
1 & 0 & 0_{1,k-1}\\
0 & 1 & 0_{1,k-1}\\ \hline
0 &0 & W(\bar{\alpha}_2,k-1)
\end{array}\right)\\
&= \left( \begin{array}{cc|c}
1 & 0 & 0_{1,k-1}\\ 
w_{\bar{\alpha}_0,1} &1 & 0_{1,k-1}\\\hline
R(\bar{\alpha}_0,2)& R(\bar{\alpha}_0+\bar{\alpha}_1,1) & W(\bar{\alpha}_0+\bar{\alpha}_1+\bar{\alpha}_2,k-2)
\end{array}\right).
\end{align*}}
In the case of $k-th$ switchings of order, we have the following form of the switching matrix $W(\bar{\alpha}_0,k,0)W(\bar{\alpha}_1,k,1)\cdots W(\bar{\alpha}_k,k,k)$, i.e.,
\begin{equation*}
\begin{pmatrix}
1 & 0 & 0&\dots&0\\
{w_{\bar{\alpha}_0,1}}& 1 & 0&\dots&0\\
{w_{\bar{\alpha}_0,2}} &{w_{\bar{\alpha}_0+\bar{\alpha}_1,1}} & 1 &\dots&0\\
{w_{\bar{\alpha}_0,3}}&{w_{\bar{\alpha}_0+\bar{\alpha}_1,2}}&{w_{\bar{\alpha}_0+\bar{\alpha}_1+\bar{\alpha}_2,1}} &\dots&0\\
\vdots&\vdots&\vdots& \dots&\vdots\\
{w_{\bar{\alpha}_0,k}} &{w_{\bar{\alpha}_0+\bar{\alpha}_1,k-1}}&{w_{\bar{\alpha}_0+\bar{\alpha}_1+\bar{\alpha}_2,k-2}} &\dots&1
\end{pmatrix}
\end{equation*}
or shortly, using the relationship~(\ref{suma})
\begin{equation*}
\begin{pmatrix}
1 & 0 & 0&\dots&0&0\\
w_{{\alpha}_0,1} & 1 & 0&\dots& 0& 0\\
w_{{\alpha}_0,2} &w_{{\alpha}_1\,1} & 1 &\dots&0&0\\
w_{{\alpha}_0,3} &w_{{\alpha}_1,2} &w_{{\alpha}_2,1}&\dots&0&0\\
\vdots&\vdots&\vdots& \dots&\vdots&\vdots\\
w_{{\alpha}_0,k-1} &w_{{\alpha}_1,k-2} &w_{{\alpha}_2,k-3}&\dots&1&0\\
w_{{\alpha}_0,k} &w_{{\alpha}_1,k-1} &w_{{\alpha}_2,k-2} &\dots&w_{{\alpha}_{k-1},1}&1
\end{pmatrix}.
\end{equation*}
The coefficients of the matrix above are identical with the coefficients given by~Def.~\ref{def:rozn2}, which completes the proof.
\end{proof}
\section{Numerical examples}\label{sec:exampl}
This section contains numerical examples of the proposed methods, computed in Matlab/Simulink environment, using the dedicated numerical routines~\cite{vfodif}, developed by the authors. 
\begin{example}\label{ex:integral_1_2_3_1}
Let us consider a variable order integration of unit step function $f(t)=1(t)$ with the following sequence of integer orders $\Lambda = \{-1,-2,-3,-1\}$ switched every one second, i.e.,
\begin{align*}
\alpha(t) &= 
\begin{cases}
-1 & \text{for\; $0\leq t< 1$},\\
-2 & \text{for\; $1\leq t< 2$},\\
-3 & \text{for\; $2\leq t< 3$},\\
-1 & \text{for\; $3\leq t\leq 4$}.
\end{cases}
\end{align*}
The analytical solution of such integration is the following
\begin{align*}
_0\mathrm{D}^{\alpha(t)}_t f(t) &= \int_0^t{\frac{f(\tau)(t-\tau)^{\alpha(\tau)-1}}{\Gamma(\alpha(\tau))}\mathrm{d}\tau}\\
&=
\begin{cases}
t & \text{for\; $0\leq t< 1$},\\
\frac{1}{2}t^2 - t + \frac{3}{2} & \text{for\; $1\leq t< 2$},\\
\frac{1}{6}t^3 - t^2 + 3t - \frac{11}{6} & \text{for\; $2\leq t< 3$},\\
\frac{1}{2}t^2 - \frac{1}{2}t - \frac{1}{3} & \text{for\; $3\leq t\leq 4$}.
\end{cases}
\end{align*}
The plot of integration result is presented in~Fig.~\ref{fig:integral_1_2_3_1}.
\begin{figure}[ht!]
\centering

\psfrag{a}[l][l]{\scriptsize $-\alpha(t)$} 
\psfrag{f}[l][l]{\scriptsize $_0\mathrm{D}^{\alpha(t)}_t1(t)$}
\psfrag{Time}[l][l]{\scriptsize $t$} 

\includegraphics[scale=0.55]{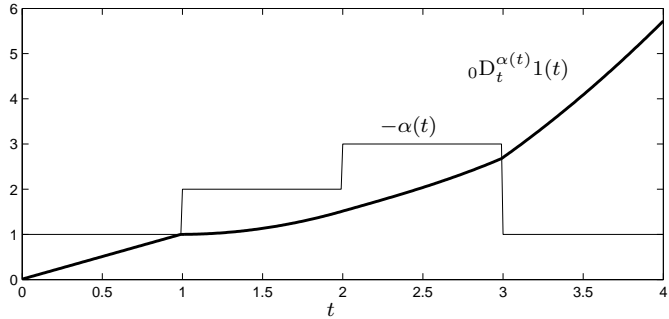}
\caption{Plot of the variable order integral of unit step function~(Ex.~\ref{ex:integral_1_2_3_1})}\label{fig:integral_1_2_3_1}
\end{figure}
The difference between analytical and numerical solution, for different integration steps $h$, is depicted in~Fig.~\ref{fig:integral_1_2_3_1_diff}.

\begin{figure}[ht!]
\centering

\psfrag{k1}[l][l]{\scriptsize $h=0.05$} 
\psfrag{k2}[l][l]{\scriptsize $h=0.01$}
\psfrag{k3}[l][l]{\scriptsize $h=0.005$} 
\psfrag{t}[l][l]{\scriptsize $t$} 

\includegraphics[scale=0.55]{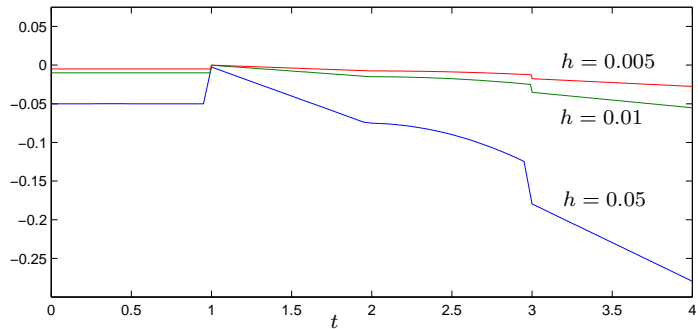}
\caption{Errors between analytical and numerical solution for different integration steps~(Ex.~\ref{ex:integral_1_2_3_1})}\label{fig:integral_1_2_3_1_diff}
\end{figure} 

%
%

\end{example}

\begin{example}\label{ex:integral_fractional}
Let us consider a variable order integration of constant function $f(t)=1(t)$ with the following sequence of fractional orders $\Lambda = \{-0.4,-1.8,-0.5,-2.5\}$ switched with every one second, i.e.,
\begin{align*}
\alpha(t) &= 
\begin{cases}
-0.4 & \text{for\; $0\leq t< 1$},\\
-1.8 & \text{for\; $1\leq t< 2$},\\
-0.5 & \text{for\; $2\leq t< 3$},\\
-2.5 & \text{for\; $3\leq t\leq 4$}.
\end{cases}
\end{align*}
The analytical solution of such integration is the following
\begin{align*}
_0\mathrm{D}^{\alpha(t)}_t f(t)&\approx 
\begin{cases}
D_1 & \text{for\; $0\leq t< 1$},\\
D_2 & \text{for\; $1\leq t< 2$},\\
D_3 & \text{for\; $2\leq t< 3$},\\
D_4 & \text{for\; $3\leq t\leq 4$},
\end{cases}
\end{align*}
where
\begin{align*}
D_1 &= 1.127 t^{0.4},\\ 
D_2 &= 0.597(t-1)^{1.8} + 1.127\left(t^{0.4} - (t-1)^{0.4}\right),\\
D_3 &= 1.128\sqrt{t-2} + 0.597\left((t-1)^{1.8} - (t-2)^{1.8}\right)\\ 
&+ 1.127\left(t^{0.4} - (t-1)^{0.4}\right),\\
D_4 &= 0.3(t-3)^{2.5} + 0.597\left((t-1)^{1.8} - (t-2)^{1.8}\right)\\ 
&+ 1.128\left(\sqrt{t-2} - \sqrt{t-3}\right) + 1.127\left(t^{0.4} - (t-1)^{0.4}\right).
\end{align*}
The plot of integration result is presented in~Fig.~\ref{fig:integral_fractional}.
\begin{figure}[ht!]
\centering

\psfrag{a}[l][l]{\scriptsize $-\alpha(t)$} 
\psfrag{f}[l][l]{\scriptsize $_0\mathrm{D}^{\alpha(t)}_t1(t)$}
\psfrag{Time}[l][l]{\scriptsize $t$} 

\includegraphics[scale=0.55]{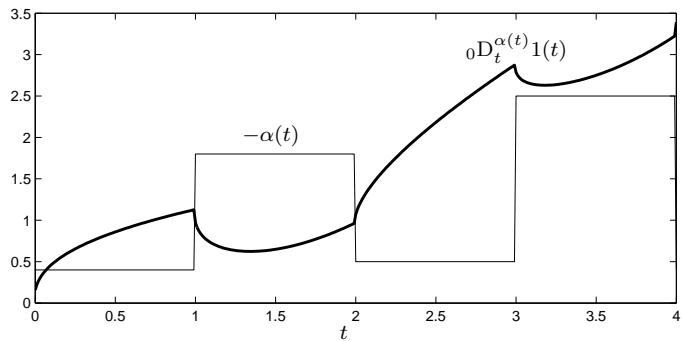}
\caption{Plot of the variable order integral of constant function~(Ex.~\ref{ex:integral_fractional})}\label{fig:integral_fractional}
\end{figure}

%
\end{example}

\section{Analog realization of the second type of fractional variable order derivative}
\label{sec:analog}

\subsection{Analog realization of the half order integral}\label{sec:domino_approx}

In this paper, the following method of half order integrator implementation, introduced in~\cite{SierociukMMAR11} and meticulously investigated in~\cite{21,petras2012}, will be used.
The scheme of this method is presented in Fig.~\ref{fig:half}. 
%
%
%
%


\begin{figure}[!ht]
\centering
\includegraphics[scale=0.8]{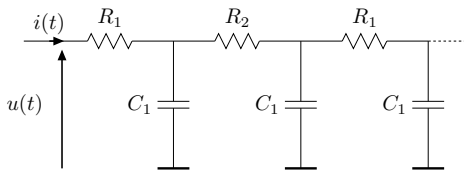}
\caption{Domino ladder type analog model of half order integrator}
\label{fig:half}
\end{figure}

Based on the observation, the values of $R_1$ and $C_1$ are chosen in order to satisfy the required low frequency limit. Value $R_2$ is chosen in order to satisfy the condition $R_1/R2\approx 0.25$, and the realization length $m$ (number of branches) is chosen in order to satisfy required frequency range of the approximation.

\subsection{Experimental setup}

An analog realization of switching system directly based on the switching scheme given in Fig.~\ref{fig:chain} is presented in Fig.~\ref{fig:circdef}. This scheme can be easily extended to the switching rule presented in Fig.~\ref{fig:mchain}.
%
The experimental setup was prepared in order to modelling simple-switching case. The structure of the half order integrator, used in the setup, is based on the domino ladder approximation presented in Section \ref{sec:domino_approx}. However, the first order integrator is realized according to traditional scheme based on capacitor. Because, in both cases of realizations with operational amplifiers, the signals with inverted polarization were obtained. This required amplifiers with gain $-1$ for each integrator (amplifiers $A_2$ and $A_4$). Integrators based on amplifiers $A_1$ and $A_3$ contain gain resistors $R=9.65k\Omega$ and impedances $Z_1$ and $Z_2$ chosen according to the type of realized orders. As a realization of switches $S_1$ and $S_2$ integrated analog switches DG303 were used. In order to obtain impedance order equal to $-0.5$, the domino ladder approximation with the following elements $R_{1}=2k\Omega$, $R_{2}=8.2k\Omega$, $C_{1}=470nF$ and of implementation length $m=100$, was used. The experimental circuit was connected to the  dSPACE DS1104 PPC card with a PC. 
\begin{figure}[!ht]
\centering
\includegraphics[scale=0.7]{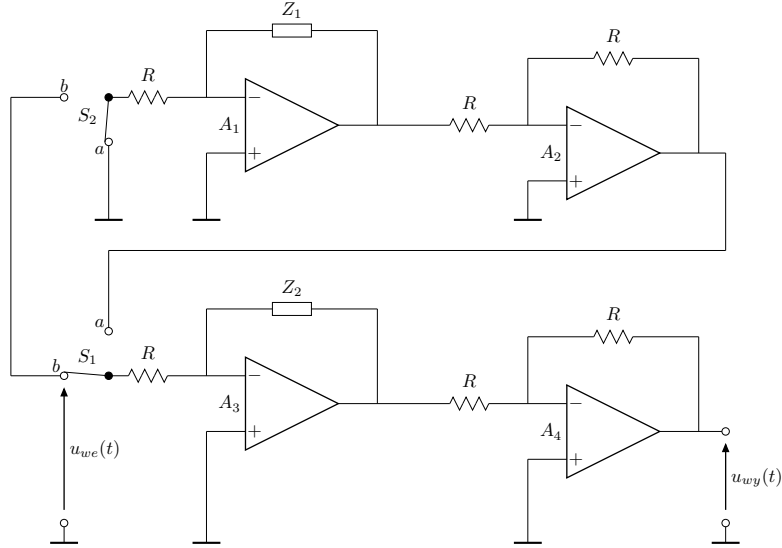}
\caption{Analog realization of the 2nd type of fractional variable order integral}
\label{fig:circdef}
\end{figure}
\subsection{Experimental results}


\subsubsection{Switching between order $\alpha=-0.5$ and $\alpha=-1$}
For a case of switching between order $\alpha=-0.5$ and $\alpha=-1$, in the structure presented in Fig.~\ref{fig:circdef}, both impedance $Z_1$ and $Z_2$ contains domino ladders structures each of order $-0.5$. This implies that operational amplifiers $A_1$ and $A_3$ realize half order integrators circuits. Before switching, only one half order integrator is used and the whole system possesses $-0.5$ order. After switching, both half order integrators are connected in series, which results that the system order is equal to $-1$.

The experimental results of integrator with switched orders from $\alpha=-0.5$ to $\alpha=-1$ compared with numerical results are presented in Fig.~\ref{analog_out_05_1}. The difference between numerical realization of the 2nd type derivative definition and its analog implementation is presented in Fig.~\ref{analog_err_05_1}. The sample time for all measurements was chosen as $0.01$ sec and input signal was $u(t)=0.01 \cdot 1(t)$.
The parameters of analog models were obtained by identification based on time domain responses, separately for both orders.
Obtained transfer function for the case of order $\alpha=-0.5$ (single half order integrator) is
\begin{equation*}
G_1(s)=\frac{1}{0.058s^{0.5}}.
\end{equation*}
and for the case of order $\alpha=-1$ (both half order integrators in series connection)
\begin{equation*}
G_2(s)=\frac{1}{0.041s},
\end{equation*}
The switching time was equal to $1$ sec.
\begin{figure}[!ht]
\centering
\includegraphics[width=0.7\textwidth]{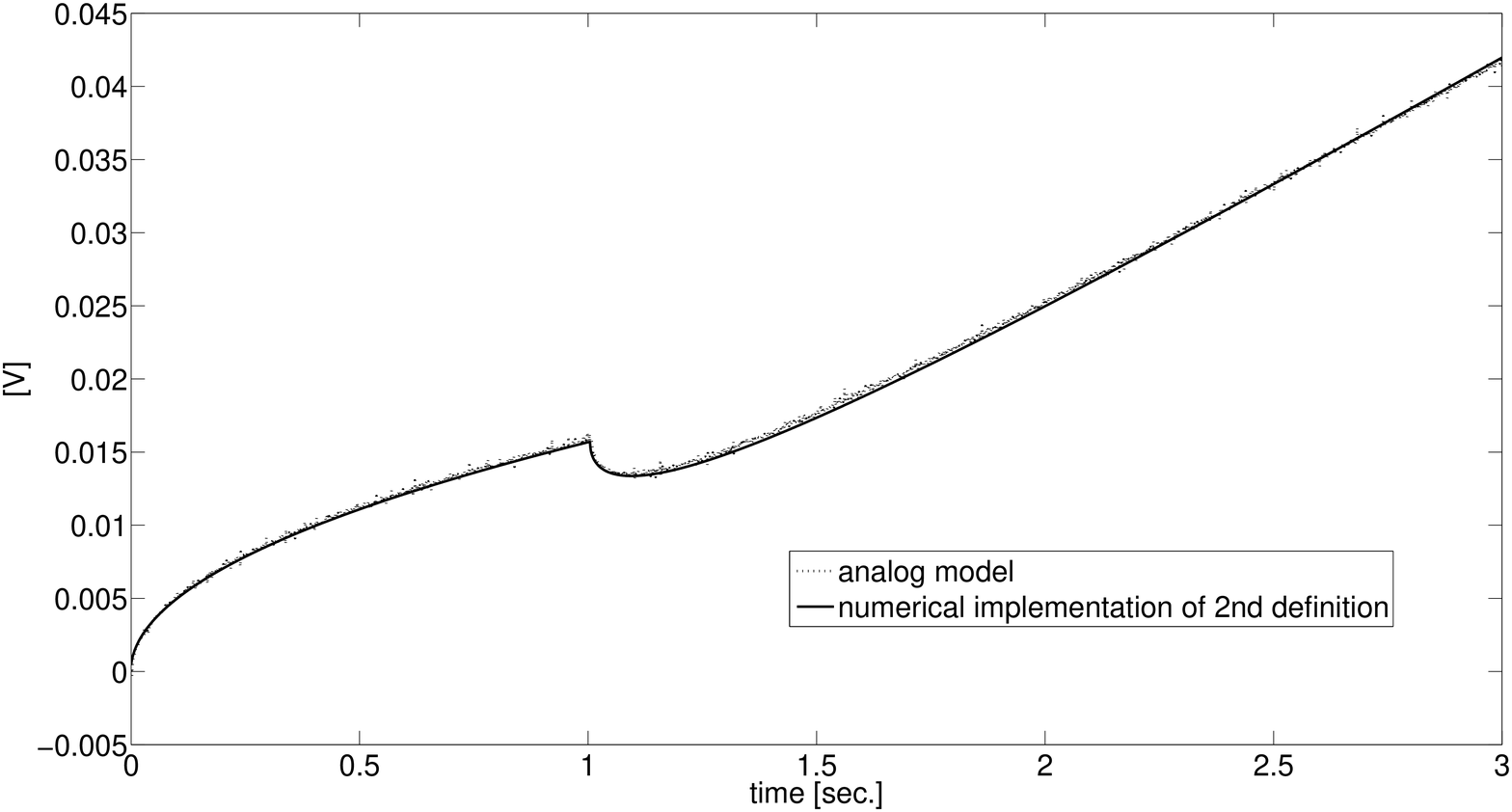}
\caption{Results of analog and numerical implementation of switching order derivative from $\alpha=-0.5$ to $\alpha=-1$ }
\label{analog_out_05_1}
\end{figure}
\begin{figure}[!ht]
\centering
\includegraphics[width=0.7\textwidth]{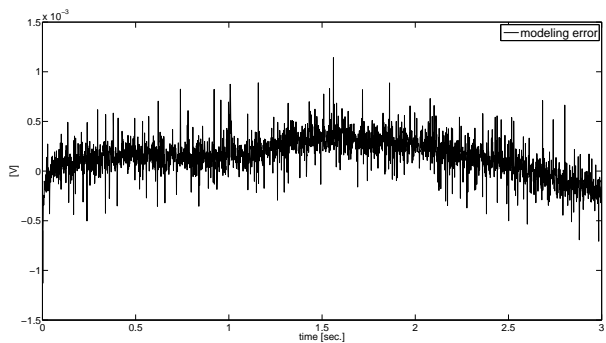}
\caption{Difference between analog and numerical implementation of switching order derivative from $\alpha=-0.5$ to $\alpha=-1$ }
\label{analog_err_05_1}
\end{figure}
\subsubsection{Switching between order $\alpha=-0.5$ and $\alpha=-1.5$}
In this case, impedance $Z_1$ is the first order capacitor, and impedance $Z_2$ is a $-0.5$ order domino ladder. 
The experimental results of integrator with order switching from $\alpha=-0.5$ to $\alpha=-1.5$, compared to the numerical results, are presented in Fig.~\ref{analog_out_05_15}. The difference between numerical realization of the 2nd type derivative definition and its analog implementation is presented in Fig.~\ref{analog_err_05_15}. 
By identification the following models were obtained.
For order $\alpha=-0.5$
\begin{equation*}
G_1(s)=\frac{1}{0.68s^{0.5}},
\end{equation*}
and for order $\alpha=-1.5$
\begin{equation*}
G_2(s)=\frac{1}{0.0295s^{1.5}}.
\end{equation*}
The switching time was equal to $1$ sec.
\begin{figure}[!ht]
\centering
\includegraphics[width=0.7\textwidth]{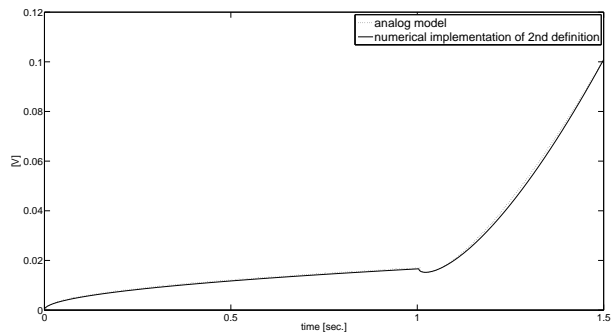}
\caption{Results of analog and numerical implementation of switching order derivative from $\alpha=-0.5$ to $\alpha=-1.5$ }
\label{analog_out_05_15}
\end{figure}
\begin{figure}[!ht]
\centering
\includegraphics[width=0.7\textwidth]{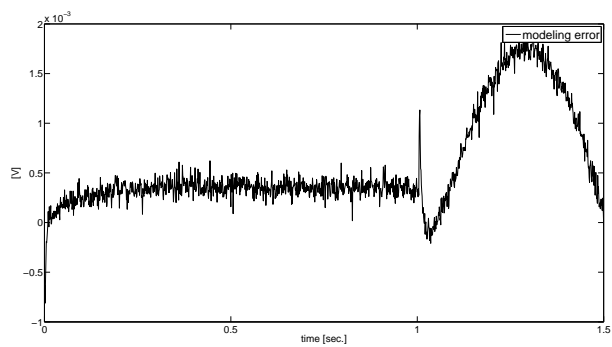}
\caption{Difference between analog and numerical implementation of switching order derivative from $\alpha=-0.5$ to $\alpha=-1.5$ }
\label{analog_err_05_15}
\end{figure}
\subsubsection{Switching between order $\alpha=-1$ and $\alpha=-1.5$}
In this case, impedance $Z_1$ is a domino ladder of order $-0.5$, and impedance $Z_2$ is the first order capacitor.
The experimental results of integrator with order switching from $\alpha=-1$ to $\alpha=-1.5$, compared to the numerical results, are presented in Fig.~\ref{analog_out_1_15}. The difference between numerical realization of the 2nd type derivative definition and its analog implementation is presented in Fig.~\ref{analog_err_1_15}. 
By identification the following models were obtained.
For order $\alpha=-1$
\begin{equation*}
G_1(s)=\frac{1}{0.038s},
\end{equation*}
and for order $\alpha=-1.5$
\begin{equation*}
G_2(s)=\frac{1}{0.027s^{1.5}}.
\end{equation*}
The switching time was equal to $0.7$ sec.
\begin{figure}[!ht]
\centering
\includegraphics[width=0.7\textwidth]{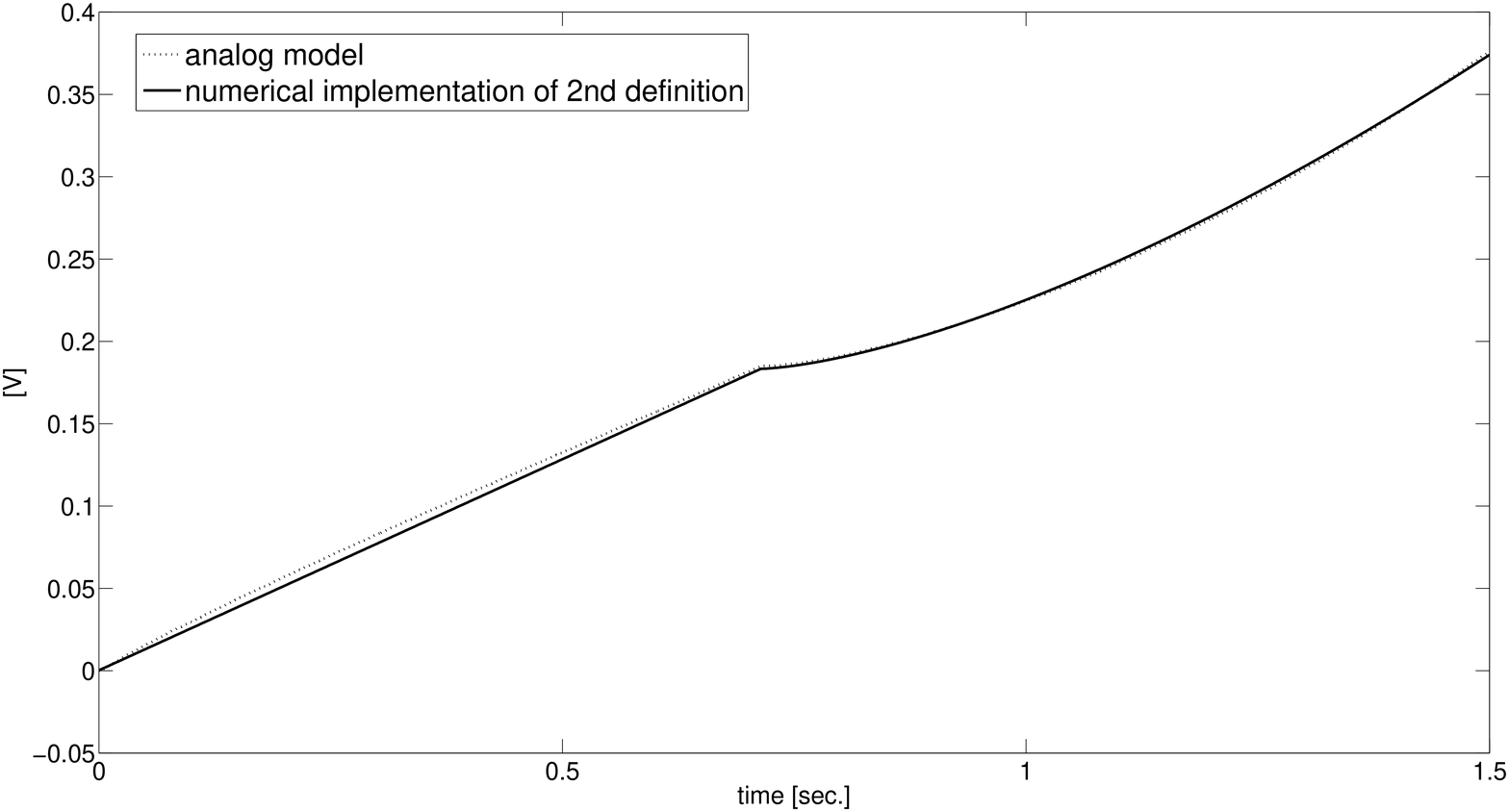}
\caption{Results of analog and numerical implementation of switching order derivative from $\alpha=-1$ to $\alpha=-1.5$ }
\label{analog_out_1_15}
\end{figure}
\begin{figure}[!ht]
\centering
\includegraphics[width=0.7\textwidth]{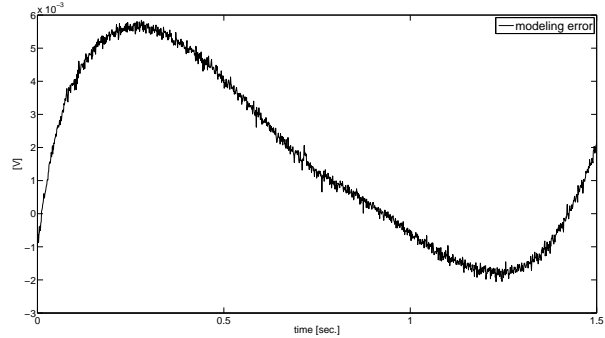}
\caption{Difference between analog and numerical implementation of switching order derivative from $\alpha=-1$ to $\alpha=-1.5$ }
\label{analog_err_1_15}
\end{figure}
\section{Conclusions}
\label{sec:conclusions}

The paper presented a switching scheme for the second type of variable order derivative. The numerical scheme, based on matrix approach, for that switching scheme was introduced and investigated. It was shown that obtained numerical scheme is equivalent to the 2nd type of fractional variable order derivative. This switching scheme can also be used as an interpretation of the second type of the definition. It is also worth to notice that the rule given by the switching scheme can itself be a definition of a variable order derivative and the relation given by Def.~\ref{def:rozn2} is just a consequence of this scheme. Presented interpretation allows to better understand behaviour of the definition and, in general, switching process in variable order systems. Based on this, it can give rise to more appropriate choice of definitions type, depending on particular application. Additionally, an analog modelling method of switched order integrator was introduced and examined. Obtained results were compared with the numerical implementation and show high accuracy of the introduced method. 

\section*{Acknowledgment}

This work was supported by the Polish National Science Center with the decision number DEC-2011/03/D/ST7/00260



\end{document}